 \def\NZQ{\mathbb}               
 \def\NN{{\NZQ N}}
 \def\FF{{\NZQ F}}
 \def\frk{\mathfrak}               
 \def\mm{{\frk m}}
 \def\fkm{{\frk m}}
 \def\G{{\mathcal G}}
 \def\Soc{{\mathbf Soc}}
 \def\opn#1#2{\def#1{\operatorname{#2}}} 
 \opn\chara{char} \opn\length{\ell} \opn\pd{pd} \opn\rk{rk}
 \opn\projdim{proj\,dim} \opn\injdim{inj\,dim} \opn\rank{rank}
 \opn\depth{depth} \opn\grade{grade} \opn\height{height}
 \opn\bigheight{bigheight}
 \opn\embdim{emb\,dim} \opn\codim{codim}
 \opn\Tr{Tr} \opn\bigrank{big\,rank}
 \opn\superheight{superheight}\opn\lcm{lcm}
 \opn\trdeg{tr\,deg}
 \opn\reg{reg} \opn\lreg{lreg} \opn\ini{in} \opn\lpd{lpd}
 \opn\size{size} \opn\sdepth{sdepth}
 \opn\link{link}\opn\fdepth{fdepth}\opn\lex{lex}
 \opn\tr{tr}
 \opn\type{type}
 \opn\gap{gap}
 \opn\arithdeg{arith-deg}
 \opn\Deg{Deg}
 \opn\sat{sat}
 \opn\mat{mat}
 \opn\Mat{Mat}
 \opn\div{div} \opn\Div{Div} \opn\cl{cl} \opn\Cl{Cl}
 \opn\Spec{Spec} \opn\Supp{Supp} \opn\supp{supp} \opn\Sing{Sing}
 \opn\Ass{Ass} \opn\Min{Min}\opn\Mon{Mon}
 \opn\Ann{Ann} \opn\Rad{Rad} \opn\Soc{Soc}
 \opn\Im{Im} \opn\Ker{Ker} \opn\Coker{Coker} \opn\Am{Am}
 \opn\Hom{Hom} \opn\Tor{Tor} \opn\Ext{Ext} \opn\End{End}
 \opn\Aut{Aut} \opn\id{id}
 \opn\nat{nat}
 \opn\pff{pf}
 \opn\Pf{Pf} \opn\GL{GL} \opn\SL{SL} \opn\mod{mod} \opn\ord{ord}
 \opn\Gin{Gin} \opn\Hilb{Hilb}\opn\sort{sort}
 \opn\PF{PF}\opn\Ap{Ap}
 \opn\mult{mult}
 \opn\bight{bight}
 \opn\aff{aff}
 \opn\relint{relint} \opn\st{st}
 \opn\lk{lk} \opn\cn{cn} \opn\core{core} \opn\vol{vol}  \opn\inp{inp} \opn\nilpot{nilpot}
 \opn\link{link} \opn\star{star}\opn\lex{lex}\opn\set{set}
 \opn\width{wd}
 \opn\Fr{F}
 \opn\QF{QF}
 \opn\G{G}
 \opn\type{type}\opn\res{res}
 \opn\conv{conv}
 \opn\Shad{Shad}
 \opn\gr{gr}
 \def\pot#1#2{#1[\kern-0.28ex[#2]\kern-0.28ex]}
 \opn\dirlim{\underrightarrow{\lim}}
 \opn\inivlim{\underleftarrow{\lim}}
 \let\iso=\cong
 \let\to=\rightarrow
 \def\Implies{\ifmmode\Longrightarrow \else
         \unskip${}\Longrightarrow{}$\ignorespaces\fi}
 \def\implies{\ifmmode\Rightarrow \else
         \unskip${}\Rightarrow{}$\ignorespaces\fi}
 \def\iff{\ifmmode\Longleftrightarrow \else
         \unskip${}\Longleftrightarrow{}$\ignorespaces\fi}
\theoremstyle{plain}
 \newtheorem{Theorem}{Theorem}[section]
 \newtheorem{Lemma}[Theorem]{Lemma}
 \newtheorem{Corollary}[Theorem]{Corollary}
 \newtheorem{Proposition}[Theorem]{Proposition}
 \newtheorem{Claim}{Claim}
\theoremstyle{definition}
 \newtheorem{Remark}[Theorem]{Remark}
 \newtheorem{Example}[Theorem]{Example}
 \newtheorem{Question}[Theorem]{Question}
 \newtheorem{Notation}[Theorem]{Notation}
 \newtheorem*{Acknowledgment}{Acknowledgment}
 \let\epsilon\varepsilon
 \let\kappa=\varkappa
\opn\sign{Sign}
\title[Certain monomial ideals]{Certain monomial ideals whose numbers of generators of powers descend}
\author{Reza Abdolmaleki}
\address{Department of Mathematics, Institute for Advanced studies in basic science (IASBS), 45195-1159 Zanjan, Iran}
\email{abdolmaleki@iasbs.ac.ir}
\author{Shinya Kumashiro}
\address{Department of Mathematics and Informatics, Graduate School of Science and Engineering, Chiba University, Yayoi-cho 1-33, Inage-ku, Chiba, 263-8522, Japan}
\email{caxa2602@chiba-u.jp}
\thanks{2020 {\em Mathematics Subject Classification.} 13D40, 13F20}
\thanks{{\em Key words and phrases.} number of generators, polynomial ring, monomial ideal}
\thanks{The second author was supported by JSPS KAKENHI Grant Number JP19J10579 and JSPS Overseas Challenge Program for Young Researchers.}
\begin{document}

\begin{abstract}
This paper studies the numbers of minimal generators of powers of monomial ideals in polynomial rings. 
For a monomial ideal $I$ in two variables, Eliahou, Herzog, and Saem gave a sharp lower bound $\mu (I^2)\ge 9$ for the number of minimal generators of $I^2$ with $\mu(I)\geq 6$. Recently, Gasanova constructed monomial ideals such that $\mu(I)>\mu(I^n)$ for any positive integer $n$. In reference to them, we construct a certain class of monomial ideals such that $\mu(I)>\mu(I^2)>\cdots >\mu(I^n)=(n+1)^2$ for any positive integer $n$, which provides one of the most unexpected behaviors of the function $\mu(I^k)$.  The monomial ideals also give a peculiar example such that the Cohen-Macaulay type (or the index of irreducibility) of $R/I^n$ descends. 
\end{abstract}

\maketitle



\section{Introduction}\label{section1}

The number of minimal generators of ideals plays an important role to investigate commutative rings. As a classical fact, the notion of regular local rings is defined by the number of minimal generators of the maximal ideal, and the numbers of minimal generators of powers of the maximal ideal play as a polynomial function. More generally, for each ideal, the number of generators of powers eventually agrees with a unique polynomial function. 
In particular, the function diverges infinitely. However, before being a polynomial, the numbers of generators of powers sometimes behave in an unexpected way even in polynomial rings (see for example \cite{AHZ} and \cite{Gasanova}). Therefore, it is natural to ask how bad the numbers of minimal generators of powers do there exist. 

The study of this direction may originate from the question of Judith Sally in 1974. She asked whether there exists a $1$-dimensional local domain for which the square of the maximal ideal has less generators than the maximal ideal itself. Such an example is given in \cite{HerzogWaldi}. Then many researchers study the numbers of generators of ideals $I$ and $I^2$. 
Especially, there are some interesting results for monomial ideals in $2$ variables.  Let $\mu(I)$ denote the  minimal number of generators of a monomial ideal $I$. We should note that it is not too difficult to construct examples of monomial ideals $I$ in a polynomial ring $S$ with at least $4$ variables such that $\mu(I)>\mu(I^2)$ since $I$ admits $\height (I)< \dim S$. Hence the assumption of $2$ variables is a reasonable restriction, and it actually provides ideals in arbitrary variables by the polynomial extension of ideals.

Let $K[x, y]$ be the polynomial ring over a field $K$ and $I$ a monomial ideal of $K[x, y]$. Then Eliahou, Herzog, and Saem \cite[Theorem 1.2]{EliahouHerzogSaem} showed a sharp lower bound $\mu(I^2)\ge 9$ when $\mu (I)\ge 6$, and Gasanova \cite{Gasanova} constructed monomial ideals $I$ such that $\mu(I)>\mu(I^n)$ for any positive integer $n$. 
With this background, our result is stated as follow:

\begin{Theorem}\label{b1.1}
For any positive integer $n$, there exists a monomial ideal $I$ satisfying the following conditions:
\begin{align*}
& \mu(I)>\mu(I^2)>\cdots >\mu(I^n)=(n+1)^2 \text{ and} \\ & \mu(I^k)=(n+2)k+1 \quad \text{for all $k\ge n$.}
\end{align*}
\end{Theorem}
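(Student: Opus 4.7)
The target formula $\mu(I^k) = (n+2)k + 1$ for $k \ge n$ coincides with $\mu(\mathfrak{m}^{k(n+2)})$, where $\mathfrak{m} = (x,y)$, and the identity $(n+1)^2 = n(n+2) + 1$ matches the endpoint value at $k=n$. This is a strong hint that one should aim for $I^k = \mathfrak{m}^{k(n+2)}$ for every $k\ge n$. Such a target forces $I\subseteq \mathfrak{m}^{n+2}$: a generator $u$ of $I$ of degree $<n+2$ would yield $u^n\in I^n$ of degree $<n(n+2)$, contradicting $I^n=\mathfrak{m}^{n(n+2)}$. On the other hand, $I$ must contain enough degree-$(n+2)$ monomials (including $x^{n+2}$ and $y^{n+2}$, forced by a pure-power argument) to produce every monomial of degree $n(n+2)$ as an $n$-fold product of generators. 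The content of the theorem is that $I$ can additionally carry many higher-degree minimal generators so that the chain $\mu(I)>\mu(I^2)>\cdots>\mu(I^n)$ is strict, without disturbing the asymptotic equality $I^k=\mathfrak{m}^{k(n+2)}$ for $k\ge n$.

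My plan is therefore to construct $I$ by combining a \emph{skeleton} $J\subseteq \mathfrak{m}^{n+2}$ of degree-$(n+2)$ monomials, chosen large enough that $J^n=\mathfrak{m}^{n(n+2)}$, with a carefully selected antichain $D$ of \emph{decorating} monomials of higher degree. Each element of $D$ is to lie strictly above the staircase of $J$, so it is not divisible by any element of $J$ and becomes a genuine minimal generator of $I=J+(D)$; but under multiplication the decorations collapse onto the boundary of the Newton polygon of $\mathfrak{m}^{k(n+2)}$ and are absorbed once $k\ge n$. The positions of the decorations are tuned so that for each $1\le k<n$ a precisely controlled number of extra corners persists on the staircase of $I^k$, producing the strict inequalities $\mu(I^k)>\mu(I^{k+1})$ and the endpoint value $\mu(I^n)=(n+1)^2$.

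With the ideal specified, the proof reduces to (i) verifying the inclusion $I^k\supseteq \mathfrak{m}^{k(n+2)}$ for $k\ge n$, which amounts to a combinatorial statement about representing every integer in $\{0,1,\dots,k(n+2)\}$ as a $k$-fold sum of exponents drawn from the skeleton; and (ii) for each $1\le k<n$, determining the minimal generators of $I^k$ via the standard two-variable criterion that $x^ay^b\in I^k$ is a minimal generator exactly when neither $x^{a-1}y^b$ nor $x^ay^{b-1}$ lies in $I^k$, and then counting corners of the resulting staircase. The main obstacle is choosing the decorating set $D$ so that the staircase of $I^k$ drops by exactly the right number of corners at each step from $k$ to $k+1$, producing both the strict inequalities and the correct endpoint count. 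This is a delicate balancing of exponents that must be engineered as an explicit function of $n$, and carrying it out—together with the bookkeeping that matches each extra corner of $I^k$ with a specific corner of $I^{k+1}$ that absorbs it—is the technical heart of the argument.
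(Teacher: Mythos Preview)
Your plan has a genuine gap: aiming for $I^k=\mathfrak m^{k(n+2)}$ is too rigid and in fact impossible already for $n=2$. Suppose $I^2=\mathfrak m^8$. As you argue, this forces $I\subseteq\mathfrak m^4$, so the degree-$4$ generators of $I$ are $\{x^ay^{4-a}:a\in A\}$ for some $A\subseteq\{0,1,2,3,4\}$. Since $[I^2]_8$ can only come from products of two degree-$4$ generators, we need $2A=[0,8]$; a direct check shows the only proper such $A$ containing $0$ and $4$ is $\{0,1,3,4\}$. For that $A$ the staircase of $J=(y^4,xy^3,x^3y,x^4)$ leaves \emph{no} monomial of degree $\ge 5$ outside $J$ (the unique hole is $x^2y^2$), so every $I$ with $I\subseteq\mathfrak m^4$ and $I^2=\mathfrak m^8$ has $\mu(I)\le 5$. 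There is simply no room for decorations, and $\mu(I)>\mu(I^2)=9$ cannot be achieved this way.

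What you are missing is a \emph{scaling}. The paper does not take the skeleton to be generated in degree $n+2$; instead it uses
\[
I_1=(x^{p_1},y^{p_1})\,(x^{(n+1)p_1},y^{(n+1)p_1}),
\]
a four-generated ideal in degree $(n+2)p_1$ with $p_1\gg 0$. This $I_1$ already satisfies $\mu(I_1^k)=(k+1)^2$ for $k\le n$ and $\mu(I_1^k)=(n+2)k+1$ for $k\ge n$, so the asymptotics and the endpoint value $\mu(I^n)=(n+1)^2$ come from the skeleton alone; for $k\ge n$ one gets $I^k=I_1^k=(x^{p_1},y^{p_1})^{k(n+2)}$, not $\mathfrak m^{k(n+2)}$. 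The point of the large $p_1$ is that the four generators of $I_1$ are so spread out that there are huge gaps in the staircase, into which one inserts equigenerated blocks $I_2,\dots,I_n$ of increasing degrees. The paper then proves an exact formula $\mu(I^k)=(k+1)^2+k(a_2+\cdots+a_{n+1-k})$ for $1\le k\le n-1$, from which the strict descent follows by choosing $a_2,\dots,a_n$ large in turn. Your intuition that the skeleton should control the eventual polynomial and the decorations should vanish one power at a time is exactly right; but the skeleton must be dilated to make room for the decorations, and that dilation is the missing idea.
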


Let us explain about this result. The inequalities $\mu(I)>\mu(I^2)>\cdots >\mu(I^n)$ are more unexpected than the behavior obtained in \cite{Gasanova}, and the equality $\mu(I^n)=(n+1)^2$ is related to the lower bound of \cite[Theorem 1.2]{EliahouHerzogSaem} regarding $(n+1)^2$ as $9$ by $n=2$. The latter assertion claims $\mu(I^k)$ agrees with the polynomial for $k\ge n$. Thus our construction provides one of the most unexpected behaviours for numbers of generators of monomial ideals before being a polynomial function.
We also note that our ideals to describe Theorem \ref{b1.1} are constructed by $n$ equigenerated ideals in contrast that \cite{EliahouHerzogSaem, Gasanova} and \cite{AHZ} basically explore ideals constructed by two equigenerated ideals.

In Section \ref{section2} we construct certain monomial ideals which yield Theorem \ref{b1.1}. We see that Theorem \ref{b1.1} also provides a peculiar example for the index of irreducibility in the sense of \cite{CQT}.
In what follows, let $S=K[x, y]$ be the polynomial ring over a field $K$ with $\deg(x)=\deg(y)=1$. Set $\fkm=(x, y)$. For a graded ideal $I$ of $S$, $\deg(I)$   denotes the smallest degree of homogeneous elements of $I$. We denote by $[I]_k$ the $K$-vector space spanned by the elements in $I$ of degree $k$. 
For  integers $a$ and $b$, $[a, b]$ denotes the set of all integers $i$ such that $a\le i \le b$.

\section{Proof of Theorem \ref{b1.1}}\label{section2}

Our main result is precisely stated as follow:

\begin{Theorem}\label{maintheorem}
Let $m, p_1, \dots, p_m, a_2, \dots, a_m\ge 2$ be positive integers.
Let 
\begin{align*}
I_1=&(x^{p_1}, y^{p_1})(x^{(m+1)p_1}, y^{(m+1)p_1}) \text{ and}\\
I_i=&x^{ip_1+p_i}{\cdot}y^{(m+2-i)p_1+p_2+\cdots+p_i}{\cdot}(x^{p_i}, y^{p_i})^{a_i-1}
\end{align*}
be ideals of $S$ for $2\le i \le m$. Set $I=I_1 + \dots+I_m$. Suppose that 
\begin{center}
$p_1=(a_i+1)p_i$ for $2\le i\le m$ and $p_2+\cdots +p_{m-1} < p_1$. 
\end{center}
Then 
\begin{enumerate}[{\rm (a)}] 
\item $I^k=\begin{cases}
I_1^{k-1}(I_1+\cdots+I_{m+1-k}) & \text{ if $1\le k \le m-1$}\\
I_1^k & \text{ if $m\le k$}.
\end{cases}$
\item $\mu(I^k)=\begin{cases}
(k+1)^2+k(a_2+\cdots+a_{m+1-k}) & \text{ if $1\le k \le m-1$}\\
(m+2)k+1 & \text{ if $m\le k$}.
\end{cases}$
\end{enumerate}

\end{Theorem}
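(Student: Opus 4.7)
Plan. I would establish (a) by induction on $k$, then derive (b) from (a) by an explicit count of monomial generators. The containment $\supseteq$ in (a) is immediate. For $\subseteq$, use the inductive hypothesis to write
\[
I^{k+1}=\Bigl(\sum_{i=1}^{m}I_{i}\Bigr)\cdot I_{1}^{k-1}(I_{1}+\cdots+I_{m+1-k}),
\]
and show that every product of $(k+1)$ generators taken from the $I_{i}$'s lies in $I_{1}^{k}(I_{1}+\cdots+I_{m-k})$ (if $k+1\le m-1$) or in $I_{1}^{k+1}$ (if $k+1\ge m$). The inductive step reduces to two absorption statements: (i) if $i,j\ge 2$ then $I_{i}I_{j}\subseteq I_{1}^{2}$, handling products with at least two ``non-$I_{1}$'' factors; and (ii) for indices $i>m-k$, the product $I_{1}^{k}I_{i}$ is contained in $I_{1}^{k+1}+\sum_{\ell=2}^{m-k}I_{1}^{k}I_{\ell}$. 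Both are proved by exhibiting, for each monomial generator of the left side, a specific generator of the right side that divides it; the key identities are $a_{i}p_{i}=p_{1}-p_{i}$ (from $p_{1}=(a_{i}+1)p_{i}$), which lets the excess $p_{i}$-factors be absorbed into $p_{1}$-shifts, and $p_{2}+\cdots+p_{m-1}<p_{1}$, which keeps the combined ``small corrections'' strictly below one $p_{1}$-step.

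The main obstacle is the case analysis in the absorption lemma. Since the prefix monomial $x^{ip_{1}+p_{i}}\cdot y^{(m+2-i)p_{1}+p_{2}+\cdots+p_{i}}$ of $I_{i}$ has unequal $x$- and $y$-weights, one must choose, depending on $i$ and $j$, which of the nine generators of $I_{1}^{2}$ (at bidegrees indexed by $(s,t)\in\{0,1,2\}^{2}$) divides each monomial generator of $I_{i}I_{j}$. The hypothesis $p_{2}+\cdots+p_{m-1}<p_{1}$ in its sharp form ensures that the correction $p_{2}+\cdots+p_{i-1}+(a_{i}-t+1)p_{i}$ attached to each generator of $I_{i}$ lies in a window of width less than $2p_{1}$, so that once the regime of $(i,j)$ is fixed the absorbing $(s,t)$ is essentially forced.

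For part (b), given (a), count generators directly. When $k\ge m$: $I^{k}=I_{1}^{k}=(x^{p_{1}},y^{p_{1}})^{k}(x^{(m+1)p_{1}},y^{(m+1)p_{1}})^{k}$, whose generators have $x$-exponents in $p_{1}\cdot\{\beta+\gamma(m+1):0\le\beta,\gamma\le k\}$. This set is the union of the intervals $[\gamma(m+1),\gamma(m+1)+k]$ for $\gamma=0,\ldots,k$; consecutive intervals overlap exactly when $k\ge m$, whence their union covers $[0,k(m+2)]$ without gaps, yielding $(m+2)k+1$ distinct values. When $1\le k\le m-1$: these intervals are pairwise disjoint and $\mu(I_{1}^{k})=(k+1)^{2}$. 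Writing $I^{k}=I_{1}^{k}+\sum_{i=2}^{m+1-k}I_{1}^{k-1}I_{i}$, a direct exponent check (using $a_{i}p_{i}=p_{1}-p_{i}$ and $p_{2}+\cdots+p_{m-1}<p_{1}$) identifies which of the $k^{2}a_{i}$ candidate generators of $I_{1}^{k-1}I_{i}$ are divisible by a generator of $I_{1}^{k}+\sum_{\ell<i}I_{1}^{k-1}I_{\ell}$; exactly $ka_{i}$ survive as new minimal generators, roughly those coming from the ``top row'' $i_{1}=k-1$ of the $[0,k-1]^{2}$ indexing of $I_{1}^{k-1}$ paired with all $a_{i}$ generators of $I_{i}$. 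Since $\deg(I_{1}^{k-1}I_{i})=k(m+2)p_{1}+p_{1}+p_{2}+\cdots+p_{i-1}$ depends strictly on $i$, new generators for distinct $i$ live in distinct degrees and cannot coincide, giving $\mu(I^{k})=(k+1)^{2}+k\sum_{i=2}^{m+1-k}a_{i}$.
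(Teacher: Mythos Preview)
Your outline for part (b) is sound and aligns with the paper's count: the paper also shows that each $I_1^{k-1}I_u$ contributes exactly $ka_u$ new minimal generators beyond $I_1^{k-1}(I_1+\cdots+I_{u-1})$, and your identification of the surviving generators as the ``top row'' $t=k-1$ matches the explicit description there.

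The gap is in part (a). Your absorption lemma (i), $I_iI_j\subseteq I_1^2$ for $i,j\ge 2$, is \emph{false} in general. Take the paper's own example $m=5$, $p_1=72$, $p_2=18$, $p_3=12$. The monomial $x^{162}y^{414}\in I_2$ times $x^{228}y^{366}\in I_3$ gives $x^{390}y^{780}$. The nine generators of $I_1^2$ have $x$-exponents in $\{0,72,144,432,504,576,864,936,1008\}$; for one of them to divide $x^{390}y^{780}$ (total degree $1170$, versus $\deg I_1^2=1008$) you would need an $x$-exponent in $[228,390]$, and there is none. So $I_2I_3\not\subseteq I_1^2$. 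The trouble is exactly the gap in $I_1^2$ between $2p_1$ and $(m+1)p_1$: for $m\ge 3$ the generator $x$-exponents of $I_1^2$ do not cover the interval $[3p_1,mp_1]$, while products $I_iI_j$ readily land there.

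The paper sidesteps this by not attempting product-by-product absorption. Instead it proves the single containment
\[
I_1^{k-1}(I_1+\cdots+I_{m+1-k})\ \supseteq\ \mathfrak m^{(k-1)d_1+d_{m+2-k}},
\]
established by an induction on $u$ that tracks the degree-$((k-1)d_1+d_u)$ piece explicitly and shows the ``blocks'' $s(m+1)p_1+[0,(k+u-1)p_1]$ eventually cover everything. Since every term of $I^k$ other than those in $I_1^{k-1}(I_1+\cdots+I_{m+1-k})$ has degree at least $(k-1)d_1+d_{m+2-k}$ (here one uses $d_1+d_m<d_i+d_j$ for $i,j\ge 2$, which follows from $p_2+\cdots+p_{m-1}<p_1$), all such terms are swallowed at once. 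If you want to keep your inductive scheme, you must replace (i) by the weaker statement $I_1^{k-1}I_iI_j\subseteq I_1^{k}(I_1+\cdots+I_{m-k})$; but proving that directly essentially forces you through the same $\mathfrak m$-power containment.
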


Let us explain the idea of Theorem \ref{maintheorem}. By definition, $I$ consists of $m$ equigenerated ideals $I_1, \dots, I_m$. The assertion (a) claims that, as the power of $I$ gets bigger, the equigenerated ideals loses one by one. Hence, if we choose $I_i$ so that $\mu(I_i)\gg 0$, then $\mu(I^k)>\mu(I^{k+1})$ happens when $I_i$ no longer owes the monomial minimal generators of $I^{k+1}$.
The proof of Theorem \ref{maintheorem} proceeds in several steps. For simplicity we introduce some notations.

\begin{Notation}\label{b2.3}
Let $0\le a_1 <  \dots <a_\ell \le d$ be  integers.
We denote by $(a_1, \dots, a_\ell)_d$ the equigenerated monomial ideal generated by 
\[
x^{a_1}y^{d-a_1}, x^{a_2}y^{d-a_2}, \dots, x^{a_\ell}y^{d-a_\ell}.
\]
Similarly, ${}_K(a_1, \dots, a_\ell)_d$ denotes the $K$-vector space spanned by
\[
x^{a_1}y^{d-a_1}, x^{a_2}y^{d-a_2}, \dots, x^{a_\ell}y^{d-a_\ell}.
\]
We write $(a_1, \dots, a_\ell)=(a_1, \dots, a_\ell)_d$ and ${}_K(a_1, \dots, a_\ell)={}_K(a_1, \dots, a_\ell)_d$ if $a_\ell=d$.
\end{Notation}

The following lemma easily follows from Notation \ref{b2.3}.

\begin{Lemma}\label{a1.3}
For $1\le i \le m$, let $I_i$ be a monomial ideal of $S$ generated in single degree $d_i$.
Set $I_i=(a_{i1}, \dots, a_{i n_i})_{d_i}$ for  $1\le i \le m$. Then we have the following.
\begin{enumerate}[{\rm (a)}] 
\item $\prod_{i=1}^m I_i=(\sum_{i=1}^{m}a_{i k_i}\mid 1\le k_i \le n_i)_{d_1+\cdots+d_m}$.
\item Suppose that $d_1<\cdots<d_m\le \ell$. Then 
\[
\left[ I_1+\cdots +I_m\right]_\ell= {}_K(a_{i k_i}+t \mid 1\le i\le m, 1\le k_i \le n_i, 0\le t\le \ell-d_i)_\ell.
\]
\end{enumerate}
\end{Lemma}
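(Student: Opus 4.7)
The plan is to reduce both assertions to direct bookkeeping of monomial generators and monomial bases; no ring-theoretic machinery beyond the elementary description of monomial ideals will be needed. For part (a), I will use the fact that for monomial ideals, $\prod_{i=1}^m I_i$ is generated by all $m$-fold products obtained by choosing one monomial generator from each $I_i$. For part (b), the key observation is that $[\cdot]_\ell$ is $K$-linear and that for an equigenerated ideal $I_i$ of degree $d_i$, the graded piece $[I_i]_\ell$ is spanned by products of generators of $I_i$ with monomials of degree $\ell-d_i$ in $S$.

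For (a), I will take one generator $u_i=x^{a_{ik_i}}y^{d_i-a_{ik_i}}$ from each $I_i$ and compute
$u_1\cdots u_m=x^{\sum_i a_{ik_i}}\,y^{(\sum_i d_i)-\sum_i a_{ik_i}}$.
As $(k_1,\dots,k_m)$ ranges over $\prod_i\{1,\dots,n_i\}$, these products form a generating set for $\prod_i I_i$, all of degree $d_1+\cdots+d_m$ with $x$-exponent exactly $\sum_i a_{ik_i}$. Listing the distinct sums in increasing order and translating back into Notation \ref{b2.3} yields the claimed equality.

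For (b), I will first write $[I_1+\cdots+I_m]_\ell=\sum_{i=1}^m [I_i]_\ell$ as $K$-vector spaces. For each fixed $i$, since $I_i$ is generated in degree $d_i\le \ell$, the space $[I_i]_\ell$ is spanned over $K$ by the monomials $x^{a_{ik_i}}y^{d_i-a_{ik_i}}\cdot x^t y^{\ell-d_i-t}$ with $1\le k_i\le n_i$ and $0\le t\le \ell-d_i$. Each such monomial equals $x^{a_{ik_i}+t}y^{\ell-(a_{ik_i}+t)}$, which is precisely a generator appearing on the right-hand side of (b). Summing over $i$ gives the asserted equality of $K$-vector spaces.

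The only subtlety, which is purely notational, is that different triples $(i,k_i,t)$ in (b), and different tuples $(k_1,\dots,k_m)$ in (a), may yield the same monomial or the same exponent sum. This does not affect either statement because we are describing an ideal (resp.\ a $K$-vector space) by a generating (resp.\ spanning) set, so duplicates can be absorbed. Hence I expect no genuine obstacle, only mild care in converting the multiset of exponents produced by the calculation into the strictly increasing list demanded by the notation $(a_1,\dots,a_\ell)_d$.
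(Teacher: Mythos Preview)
Your proposal is correct and is exactly the natural unpacking of the definitions; the paper itself gives no proof beyond the remark that the lemma ``easily follows from Notation~\ref{b2.3},'' and your argument is precisely what that remark amounts to. Your observation about possible duplicates among the exponents is apt and harmless, since here the notation is being used to describe a generating (resp.\ spanning) set rather than a minimal one.
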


In the rest of this paper, unless otherwise stated, we suppose the assumptions of Theorem \ref{maintheorem}. Set $d_u=\deg(I_u)$ for $1\le u\le m$.

\begin{Lemma}\label{a3.1}
\begin{enumerate}[{\rm (a)}] 
\item $d_1=(m+2)p_1$ and $d_u=d_1 + p_1 + p_2 + \dots +p_{u-1}$ for $2\le u \le m$.
\item $\deg(I_1^2)<\deg(I_1 I_2)<\cdots <\deg(I_1 I_m)$ and $\deg(I_1 I_m)<\deg(I_i I_j)$ for all $i,j\in [2, m]$.
\item $I_1^k=(s(m+1)p_1+tp_1 \mid s, t\in [0, k])_{kd_1}$ for $k>0$ and
\[
I_1^{k-1}I_u=(\{s(m+1)+t+u\}p_1+vp_u \mid s, t\in[0, k-1],  v\in [1, a_u])_{(k-1)d_1+d_u}
\]
for $2\le u \le m$.
\end{enumerate}

\end{Lemma}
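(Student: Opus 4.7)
The plan is to prove all three parts by direct computation from the definitions, with Lemma~\ref{a1.3}(a) doing the heavy lifting. The single algebraic fact that drives the argument is the relation $p_1 = (a_u + 1)p_u$, which collapses the $p_u$-contribution in each generator of $I_u$ into a clean multiple of $p_1$.

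For part (a), I would simply read off the degree of a generator. In $I_1$ it is $p_1 + (m+1)p_1 = (m+2)p_1$, so $d_1 = (m+2)p_1$. For $u \ge 2$, the monomial prefactor of $I_u$ has degree $(m+2)p_1 + p_2 + \cdots + p_{u-1} + 2p_u$, and a generator of $(x^{p_u}, y^{p_u})^{a_u-1}$ contributes an additional $(a_u-1)p_u$; applying $(a_u+1)p_u = p_1$ collapses the sum to $d_1 + p_1 + p_2 + \cdots + p_{u-1}$, as claimed.

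For part (b), part (a) yields $d_1 < d_2 < \cdots < d_m$ since every $p_u$ is positive, hence $\deg(I_1 I_u) = d_1 + d_u$ is strictly increasing in $u$. The minimum of $\deg(I_i I_j)$ over $i, j \in [2, m]$ is $2 d_2$, so the second statement reduces to $d_1 + d_m < 2 d_2$; unwinding this via the formulas of (a) gives exactly $p_2 + \cdots + p_{m-1} < p_1$, which is the standing hypothesis.

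For part (c), I would rewrite each factor in the notation of Notation~\ref{b2.3} and invoke Lemma~\ref{a1.3}(a). The identities $(x^{p_1}, y^{p_1})^k = (tp_1 \mid t \in [0, k])_{kp_1}$ and $(x^{(m+1)p_1}, y^{(m+1)p_1})^k = (s(m+1)p_1 \mid s \in [0, k])_{k(m+1)p_1}$, multiplied together, yield the formula for $I_1^k$. For the second formula, absorbing the monomial prefactor of $I_u$ gives $I_u = (up_1 + vp_u \mid v \in [1, a_u])_{d_u}$; applying Lemma~\ref{a1.3}(a) to the product $I_1^{k-1} \cdot I_u$ then produces the asserted description. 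The whole argument is careful index-tracking, and I expect no genuine conceptual obstacle beyond making sure the substitution $(a_u+1)p_u = p_1$ is invoked at the right moment in the degree computation.
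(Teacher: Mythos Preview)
Your proposal is correct and follows essentially the same approach as the paper: direct reading of degrees for (a), reduction of (b) to the inequality $d_1+d_m<2d_2$ (equivalently $p_2+\cdots+p_{m-1}<p_1$), and for (c) the same expansion via Lemma~\ref{a1.3}(a) after writing $I_u=(up_1+vp_u\mid v\in[1,a_u])_{d_u}$. The paper's proof is terser but the logic is identical.
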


\begin{proof}
(a) follows from the definition of $I_i$, and (b) follows from (a) and $p_2+\cdots+p_{m-1}<~p_1$.

(c) follows from the following computations:
\begin{align*}
I_1^k=&(0,p_1)^k(0,(m+1)p_1)^k\\
=&(0, p_1, 2p_1, \dots, kp_1)(0, (m+1)p_1, 2(m+1)p_1, \dots, k(m+1)p_1)\\
=&(s(m+1)p_1+tp_1 \mid s, t\in [0, k])_{kd_1}
\end{align*}
and
\begin{align*}
I_1^{k-1}I_u=&(0,p_1)^{k-1}(0,(m+1)p_1)^{k-1}x^{up_1+p_u}y^{(m+2-u)p_1+p_2+\cdots+p_u} (0,p_u)^{a_u-1}\\
=&(\{s(m+1)+t+u\}p_1+vp_u \mid s, t\in[0, k-1],  v\in [1, a_u])_{(k-1)d_1+d_u}.
\end{align*}
\end{proof}

\begin{Proposition}\label{a3.2}
Let $k>0$ and $2\le u \le m$. 
\begin{enumerate}[{\rm (a)}] 
\item For $m+1-k< u \le m$, $I_1^{k-1}(I_1+\dots +I_{u-1})\supseteq \fkm^{(k-1)d_1+d_u}$.
\item For $2\le u\le m+1-k$, 
\begin{align*}
&\left[ I_1^{k-1}(I_1+\cdots+I_{u})/  I_1^{k-1}(I_1+\cdots+I_{u-1})\right]_{(k-1)d_1+d_{u}}\\
\cong&{}_K{\left(\{s(m+1)+k+u-1\}p_1+t p_{u} \mid s\in [0, k-1], t\in [1, a_{u}] \right)}_{(k-1)d_1+d_{u}}.
\end{align*}
\end{enumerate}
\end{Proposition}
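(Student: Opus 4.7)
The plan is to reduce both statements to an explicit bookkeeping of $x$-exponents at degree $(k-1)d_1+d_u$, using Lemma~\ref{a1.3}(b) together with the description of generators in Lemma~\ref{a3.1}(c). Writing
\[
I_1^{k-1}(I_1+\dots+I_{u-1})=I_1^k+\sum_{i=2}^{u-1}I_1^{k-1}I_i,
\]
and similarly for $I_1+\dots+I_u$, the degree $(k-1)d_1+d_u$ piece is a $K$-span of monomials $x^ay^b$ with $a+b=(k-1)d_1+d_u$ and $a$ lying in an explicit set $A_{u-1}$ (respectively $A_u$). Namely, $I_1^k$ contributes the exponents $\{(s(m+1)+t)p_1+r:s,t\in[0,k],\,r\in[0,p_1+\dots+p_{u-1}]\}$, and each $I_1^{k-1}I_i$ (for $2\le i\le u-1$) contributes $\{(s(m+1)+t+i)p_1+vp_i+r:s,t\in[0,k-1],\,v\in[1,a_i],\,r\in[0,p_i+\dots+p_{u-1}]\}$.

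The first step is to collapse these unions of intervals. The relation $p_1=(a_i+1)p_i$ makes $vp_i$ range over $\{p_i,2p_i,\dots,p_1-p_i\}$, and combining with shifts of length at least $p_i$ merges the $v$-blocks into a single interval of length $p_1+p_2+\dots+p_{u-1}-p_i$ for each fixed $(s,t,i)$. Varying $t\in[0,k-1]$ (in units of $p_1$) then glues these together, and the assumption $p_2+\dots+p_{m-1}<p_1$ keeps distinct $s$-clusters separated in a controlled way. After this simplification I expect to obtain, for each $s\in[0,k-1]$, a single interval starting at $s(m+1)p_1$ and ending at $(s(m+1)+k+u-1)p_1+p_2+\dots+p_{u-1}$, and then an additional ``top" cluster from $s=k$ in the $I_1^k$-part running from $k(m+1)p_1$ to $(k(m+1)+k+1)p_1+p_2+\dots+p_{u-1}$.

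From this simplified form I can prove both parts. For (a), the gap between the $s$ and $s+1$ clusters has length $(m-k)p_1-(p_2+\dots+p_{u-1})-u$; the hypothesis $u>m+1-k$ (together with $p_2+\dots+p_{m-1}<p_1$) forces all these gaps to collapse, so $A_{u-1}=[0,(k-1)d_1+d_u]$, which gives $\fkm^{(k-1)d_1+d_u}\subseteq I_1^{k-1}(I_1+\dots+I_{u-1})$. For (b), when $u\le m+1-k$ there remain genuine gaps of positions $(s(m+1)+k+u-1)p_1+tp_u$ with $s\in[0,k-1]$, $t\in[1,a_u]$, which are exactly the generators of $I_1^{k-1}I_u$ in Lemma~\ref{a3.1}(c) corresponding to the parameter $t=k-1$. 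I then check that the remaining generators of $I_1^{k-1}I_u$ (those with $t<k-1$) already land in $A_{u-1}$, so they vanish in the quotient, while the distinguished monomials $x^{(s(m+1)+k+u-1)p_1+tp_u}y^{\cdots}$ represent linearly independent classes in $[I_1^{k-1}(I_1+\dots+I_u)/I_1^{k-1}(I_1+\dots+I_{u-1})]_{(k-1)d_1+d_u}$, proving the claimed isomorphism.

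The main obstacle is the interval bookkeeping in the second paragraph: with three nested parameters $(s,t,v)$ plus the shift $r$, one must carefully track which intervals overlap and which produce gaps. The delicate quantitative inputs are $a_ip_i=p_1-p_i$ (which ensures the $v$-sweep together with shifts just closes up an interval of length $p_1$ minus a piece) and the strict inequality $p_2+\dots+p_{m-1}<p_1$ (which controls the outer block structure). The threshold $u=m+1-k$ emerges as the exact value separating the ``gaps close" regime (part (a)) from the ``gaps persist and match new generators" regime (part (b)).
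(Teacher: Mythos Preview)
Your plan is essentially the paper's own strategy: reduce everything to tracking the set of $x$-exponents appearing in degree $(k-1)d_1+d_u$, using Lemma~\ref{a3.1}(c) for the generators and Lemma~\ref{a1.3}(b) for the shift. The paper, however, organizes the bookkeeping as an induction on $u$ rather than a single direct collapse: the inductive step yields the clean intermediate statement (their Claim~1) that
\[
\bigl[I_1^{k-1}(I_1+\cdots+I_{u-1})\bigr]_{(k-1)d_1+d_u}
={}_K\bigl(s(m+1)p_1+w''\ \bigm|\ s\in[0,k],\ w''\in[0,(k+u-1)p_1]\bigr),
\]
from which both (a) and (b) fall out in two lines. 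This is worth adopting, because your direct all-at-once merge hides a subtlety you gloss over: for a fixed $(s,i)$, varying $t\in[0,k-1]$ does \emph{not} by itself glue the $v$-blocks of $I_1^{k-1}I_i$ into one interval (the residual gap is $p_i-(p_{i+1}+\cdots+p_{u-1})-1$, which can be positive); those gaps are only filled by the overlapping $I_1^k$ contribution and the other $I_1^{k-1}I_j$'s. Relatedly, two of your explicit formulas are off: the right endpoint of the $s$-cluster is $(s(m+1)+k+u-1)p_1$, with no extra $p_2+\cdots+p_{u-1}$ (the maximum is attained by $i=u-1$, $t=k-1$, $v=a_{u-1}$, $r=p_{u-1}$, giving $a_{u-1}p_{u-1}+p_{u-1}=p_1$ exactly), and hence the inter-cluster gap is $(m+2-k-u)p_1-1$, not $(m-k)p_1-(p_2+\cdots+p_{u-1})-u$. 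With the corrected endpoint your identification of the surviving new exponents in (b) is consistent; with your stated endpoint it would not be, since some $tp_u$ with $t$ small would already lie in the old cluster.
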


\begin{proof}
We prove by induction on $u$.
Assume that $u=2$. By Lemma \ref{a1.3}(b) and Lemma \ref{a3.1}(a) and (c),
\begin{align*}
[I_1^k]_{(k-1)d_1+d_2}&={}_K(s(m+1)p_1+tp_1+w' \mid s,t \in [0, k], w'\in [0, p_1])\\
&={}_K(s(m+1)p_1+w \mid s \in [0, k], w\in [0, (k+1)p_1])\\
[I_1^{k-1}I_2]_{(k-1)d_1+d_2}&={}_K(\{s(m+1)+t'+2\}p_1+vp_2 \mid s,t'\in [0, k-1], v\in [1, a_u]).
\end{align*}
Hence, if $m+1-k<u=2$($\Leftrightarrow m+1\le k+1$), then $I_1^k=\fkm^{(k-1)d_1+d_2}$. 
Suppose that $u=2\le m+1-k$. Then, for $0\le t'\le k-1$ and $1\le v\le a_2$,
\[
(t'+2)p_1+vp_2\le (k+1)p_1+a_2p_2<(k+2)p_1\le (m+1)p_1
\]
since $(a_2+1)p_2=p_1$. Hence 
\begin{align*}
&\left[ I_1^{k-1}(I_1+I_{2})/  I_1^{k}\right]_{(k-1)d_1+d_{2}}\\
\iso & {}_K{\left(\{s(m+1)+t'+2\}p_1+v p_{2} \middle| 
\begin{matrix}
s, t'\in[0, k-1], v\in [1, a_{2}], \text{ and }\\
(t'+2)p_1+vp_2>(k+1)p_1
\end{matrix}
\right)}_{(k-1)d_1+d_{2}}\\
=& {}_K{\left(\{s(m+1)+k+1\}p_1+v p_{2} \mid s\in[0, k-1], v\in [1, a_{2}]\right)}_{(k-1)d_1+d_{2}}.
\end{align*}

Assume that $u>2$ and our assertions (a) and (b) hold for $2, \dots, u-1$. Then, by induction hypothesis,
\begin{align}
&\left[ I_1^{k-1}(I_1+\cdots +I_{u-1})\right]_{(k-1)d_1+d_u} \nonumber \\
=&[I_1^{k}]_{(k-1)d_1+d_u} \nonumber \\
 & + \sum_{i=2}^{u-1} {}_K\left(s(m+1)p_1 + (k+i-1)p_1 + t_i p_i +w_i \middle| 
\begin{matrix}
s\in [0,k-1], t_i\in [1, a_i], \nonumber \\
w_i\in [0, d_u-d_i]
\end{matrix}
\right)\\
\begin{split}
=& {}_K(s(m+1)p_1 + w' \mid s\in [0, k], w'\in [0, kp_1 +d_u-d_1]) \\
 & + \sum_{i=2}^{u-1} {}_K\left(s(m+1)p_1 + (k+i-1)p_1 + t_i p_i +w_i \middle| 
\begin{matrix}
s\in [0,k-1], t_i\in [1, a_i], \\
w_i\in [0, d_u-d_i]
\end{matrix}
\right),
\end{split}
\end{align}
where the second equality follows from $(k-1)d_1+d_u-kd_1=d_u-d_1\ge p_1$ by Lemma \ref{a3.1} (a).

\begin{Claim}\label{claim1}
\begin{small}
\begin{align*}
\left[ I_1^{k-1}(I_1+\cdots + I_{u-1})\right]_{(k-1)d_1+d_u}
={}_K\left(s(m+1)p_1+w'' \mid s\in [0, k], w'' \in [0, (k+u-1)p_1]\right).
\end{align*}
\end{small}
\end{Claim}

\begin{proof}[Proof of Claim \ref{claim1}]
$(\subseteq)$: It is clear that $kp_1+d_u-d_1\le (k+u-1)p_1$. For $i\in [2,u-1], t_i\in [1, a_i], \text{ and } w_i\in [0, d_u-d_i]$, we have
\begin{align*}
(k+i-1)p_1+t_ip_i+w_i\le &(k+i-1)p_1 +a_ip_i+p_i+\cdots + p_{u-1}\\
=&(k+i)p_1 +p_{i+1}+\cdots + p_{u-1}\\
\le& (k+u-1)p_1.
\end{align*}
Hence we have the inclusion $\subseteq$ in Claim \ref{claim1}.

$(\supseteq)$: Let $0\le w''\le (k+u-1)p_1$. If $w''\le kp_1 +d_u-d_1$, then the monomial corresponding to $s(m+1)p_1+w''$ is in the former terms of (1).
Hence we may assume that $kp_1 +d_u-d_1< w''\le (k+u-1)p_1$. Then $(k+1)p_1 + p_2 \le w''$ since $u>2$ and Lemma \ref{a3.1}(a). Choose an integer $2\le i \le u-1$ so that 
\[
(k+i-1)p_1 + p_i\le w'' \le (k+i)p_1 + p_{i+1}.
\]
Here we regard $p_u$ as $0$ only this moment.
Then there exist integers $t\in [1, a_i]$ and $w\in [0,p_i+p_{i+1}]\subseteq[0, p_i+\dots+p_{u-1}]=[0, d_u-d_i]$ such that $w''=(k+i-1)p_1 + t p_i +w$. It follows that the inclusion $\supseteq$  holds.
\end{proof}

Assume that $m+2-k \le u\le m$. Then $(m+1)p_1\le (k+u-1)p_1$. Hence, by Claim \ref{claim1}, $I_1^{k-1}(I_1+\dots +I_{u-1})\supseteq \fkm^{(k-1)d_1+d_u}$. 

Suppose that $2< u\le m+1-k$. We have
\[
[I_1^{k-1}I_u]_{(k-1)d_1 + d_u}={}_K(\{s(m+1)+t+u\}p_1+vp_u \mid s, t\in[0, k-1],  v\in [1, a_u])_{(k-1)d_1+d_u}
\]
by Lemma \ref{a3.1}(c). Note that 
\begin{align*}
(t+u)p_1+vp_u\le&(k-1+u)p_1 + a_u p_u\\
<&(k-1+m+1-k)p_1+p_1\\
=&(m+1)p_1.
\end{align*}
Therefore, 
\begin{align*}
&\left[ I_1^{k-1}(I_1+\cdots+I_{u})/  I_1^{k-1}(I_1+\cdots+I_{u-1})\right]_{(k-1)d_1+d_{u}}\\
\cong&{}_K\left( \{s(m+1)+t+u\}p_1+vp_u \middle| 
\begin{matrix}
s, t\in[0, k-1],  v\in [1, a_u], \\
(t+u)p_1+vp_u>(k+u-1)p_1
\end{matrix}
\right)\\
=&{}_K\left( \{s(m+1)+k-1+u\}p_1+vp_u \middle| 
s\in[0, k-1],  v\in [1, a_u]
\right).
\end{align*}
\end{proof}

Now we are ready to prove Theorem \ref{maintheorem}.

\begin{proof}[Proof of Theorem \ref{maintheorem}]
(a): For $2\le k\le m-1$, by Lemma \ref{a3.1}(b),
\begin{align*}
I^k=&(I_1+\dots+I_m)^k\\
=&I_1^{k-1}(I_1+\dots+I_{m+1-k}) + (\text{monomials of degree} > (k-1)d_1 + d_{m+1-k}).
\end{align*}
On the other hand, $I_1^{k-1}(I_1+\dots+I_{m+1-k})\supseteq \fkm^{(k-1)d_1+d_{m+1-k}}$ by Proposition \ref{a3.2}(a). Hence, $I^k=I_1^{k-1}(I_1+\dots+I_{m+1-k})$.

If $k\ge m$, then $I_1^k=(sp_1 \mid s\in[0, k(m+2)])_{kd_1}=(x^{p_1}, y^{p_1})^{k(m+2)}$ by Lemma \ref{a3.1}(c). Hence 
$I_1^k\supseteq \fkm^{kd_1+p_1}=\fkm^{(k-1)d_1+d_2}$. It follows that $I^k=I_1^k$.

(b): Let $2\le k \le m-1$. Then 
\begin{align*}
\mu(I^k)=&\mu (I_1^{k-1}(I_1+\dots+I_{m+1-k}))\\
=&\mu(I^k)+\sum_{u=2}^{m+1-k} \dim_K \left[ I_1^{k-1}(I_1+\cdots+I_{u})/  I_1^{k-1}(I_1+\cdots+I_{u-1})\right]_{(k-1)d_1+d_{u}}\\
=& (k+1)^2 + \sum_{u=2}^{m+1-k} ka_u,
\end{align*}
where the third equality follows from Lemma \ref{a3.1}(c) and Proposition \ref{a3.2}(b).
For the case where $m\le k$, one can easily check that $\mu(I^k)=(m+2)k+1$ since $I^k=I_1^k$.
\end{proof}


Now we  prove Theorem \ref{b1.1}.

\begin{proof}[Proof of Theorem \ref{b1.1}]
Let $n\ge2$ be an integer. By Theorem \ref{maintheorem}, we have 
\begin{align*}
\mu(I^{n}) - \mu(I^{n-1}) &= 2n+1-(n-1)a_2 \\
\mu(I^{n-1}) - \mu(I^{n-2}) &= 2n-1+a_2-(n-2)a_3,\\
&\ \ \vdots\\
\mu(I^{k+1}) - \mu(I^{k}) &= 2k+3+a_2+\cdots +a_{m-k}-ka_{m+1-k},\\
&\ \ \vdots\\
\mu(I^{3}) - \mu(I^{2}) &= 7+a_2+\cdots+a_{n-2}-2a_{n-1},\text{ and}\\
\mu(I^{2}) - \mu(I) &= 5+a_2+\cdots+a_{n-1}-a_{n}.
\end{align*}
Therefore, by defining $a_2, a_3, \dots, a_n$ consecutively enough large, we get the assertion. We need to show that we can choose $a_2, \dots, a_n$ enough large with the assumptions 
\begin{center}
$p_1=(a_i+1)p_i$ for $2\le i\le n$ \quad and \quad $p_2+\cdots +p_{n-1} < p_1$. 
\end{center}
In fact, the former condition is always satisfied by choosing $p_1, \dots, p_n$ as follows.
\[
p_1=\prod_{i=2}^n (a_i+1) \quad \text{and} \quad p_i=\frac{p_1}{a_1+1} \quad \text{for $2\le i\le n$}.
\]
For the latter condition, by substituting $p_1=(a_i+1)p_i$, we can rephrase by 
\[
\frac{1}{a_2+1}+ \cdots + \frac{1}{a_{n-1}+1}<1.
\]
It follows that we can choose $a_2, \dots, a_n$ enough large as desired.
\end{proof}

\begin{Remark}
In the proof of Theorem \ref{b1.1}, for two integers $1\le i< j \le n$, we can choose 
$a_i=a_j=2$. It follows that at most two of inequalities in Theorem \ref{b1.1}(a) can change to the converse inequalities.
\end{Remark}
\begin{Example}
With the notation of Theorem \ref{maintheorem}, set 
\begin{small}
\begin{center}
$m=5$, $p_1=72$, $p_2=18$, $p_3=12$, $p_4=8$, $p_5=2$, $a_2=3$, $a_3=5$, $a_4=8$, and $a_5=35$. 
\end{center}
\end{small}
Then $I=I_1+I_2+I_3+I_4+I_5$, where 
\begin{align*}
&I_1=(x^{72},y^{72})(x^{432},y^{432}), &&I_2=x^{162}y^{378}(x^{18},y^{18})^2, &&I_3=x^{228}y^{318}(x^{12},y^{12})^4,\\
&I_4=x^{296}y^{254}(x^8,y^8)^7,  \text{ and } && I_5=x^{362}y^{184}(x^2,y^2)^{34}. && 
\end{align*}
One can check by CoCoA \cite{Cocoa} that $\mu (I)=55$, $\mu (I^2)=41$, $\mu (I^3)=40$, $\mu (I^4)=37$, $\mu (I^5)=36=6^2$, and $\mu (I^6)=43$.
\end{Example}

\begin{Question}
\begin{enumerate}[{\rm (a)}] 
\item For any positive integer $n$ and $(n-1)$ signs $\alpha_1, \dots, \alpha_{n-1}\in \{+, -\}$, does there exist a monomial ideal $I$ satisfying the following conditions?
\begin{enumerate}[{\rm (i)}] 
\item $\sign(\mu(I^{k+1})-\mu(I^k))=\alpha_k$ for all $1\le k\le n-1$.
\item $\mu(I^k)=(n+2)k+1$ for all $k\ge n$.
\end{enumerate}
\item In reference to \cite[Theorem 1.2]{EliahouHerzogSaem}, 
for any monomial ideal $I$ with $\mu(I)\gg 0$, does $\mu(I^n)\ge (n+1)^2$ hold?
\end{enumerate}
\end{Question}

Theorem \ref{b1.1} also gives a peculiar example for the index of irreducibility.
For a while, let $R$ be a Noetherian local  ring and $I \subset R$ be an $\mm$-primary ideal.  Then Cuong, Quy, and Truong \cite[Theorem 4.1]{CQT} showed that the numerical function
\begin{align*}
p: \NN  \rightarrow    \NN, \ n\mapsto  \mathrm{r}(R/I^n)
\end{align*}
is a polynomial function of degree $\dim R-1$ for $n \gg 0$.
Here, $\mathrm{r}(R/I^n)$ denotes the Cohen-Macaulay type of $R/I^n$ (or the {\it index of irreducibility}  of $I^n$ in the sense of \cite{CQT}), that is the length $\ell_R((I:_R \mm)/I^n$)  of the socle of $R/I^n$. 
In the papers \cite{CQT, T}, they gave a nice characterization of the Cohen-Macaulay property by the index of irreducibility of parameter ideals. On the other hand, little is known about the above numerical function other than parameter ideals. In the rest of this paper, we will discuss the relationship between the Cohen-Macaulay type and the number of minimal generators. For a finitely generated $R$-module $M$, let $\mu(M)$ denotes the minimal number of generators of $M$.

\begin{Proposition}\label{type}
Let $(R, \fkm)$ be a $2$-dimensional  regular local ring and $I \subset R$ be an $\mm$-primary ideal. Then we have $\mu(I^n)=\mathrm{r}(R/I^n)+1$.
\end{Proposition}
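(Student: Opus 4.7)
The plan is to prove the more general identity $\mu(J) = \mathrm{r}(R/J) + 1$ for every $\mm$-primary ideal $J$ of $R$, and then specialize to $J = I^n$. The strategy is to express $\Soc(R/J)$ as an Ext group and evaluate that group from a minimal free resolution of $J$; minimality of the resolution will force the key map to vanish and deliver the count.

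First I would set up the resolution. Since $R$ is Cohen--Macaulay of dimension $2$ and $J$ is $\mm$-primary, $\depth_R J = 1$, so the Auslander--Buchsbaum formula gives $\pd_R J = 1$. Because $J$ is a nonzero ideal in a domain it has rank one, and the alternating sum of ranks forces the minimal free resolution of $J$ to take the form
$$0 \to R^{\mu(J)-1} \xrightarrow{\phi} R^{\mu(J)} \to J \to 0,$$
with $\phi(R^{\mu(J)-1}) \subseteq \mm R^{\mu(J)}$ by minimality. Applying $\Hom_R(k,-)$ to $0 \to J \to R \to R/J \to 0$ and using $\depth R = 2$, which gives $\Hom_R(k, R) = 0 = \Ext^1_R(k, R)$, the long exact sequence collapses to
$$\Soc(R/J) \;\cong\; \Ext^1_R(k, J).$$

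The core step is to evaluate this Ext directly from the resolution. Applying $\Hom_R(k,-)$ to the resolution above and again using $\Ext^1_R(k, R) = 0$, I would extract the exact sequence
$$0 \to \Ext^1_R(k, J) \to \Ext^2_R(k, R)^{\mu(J)-1} \xrightarrow{\bar\phi} \Ext^2_R(k, R)^{\mu(J)} \to \Ext^2_R(k, J) \to 0,$$
in which $\bar\phi$ is the matrix of $\phi$ with entries reduced modulo $\mm$, since $\Ext^2_R(k, R)$ is annihilated by $\mm$. By minimality these entries lie in $\mm$, so $\bar\phi = 0$; combined with $\Ext^2_R(k, R) \cong k$ (regularity of $R$), this yields $\Soc(R/J) \cong k^{\mu(J)-1}$, i.e., $\mathrm{r}(R/J) = \mu(J) - 1$. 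Taking $J = I^n$ gives the proposition. I do not anticipate a serious obstacle; the only points needing care are identifying the syzygy rank as $\mu(J)-1$ and the vanishing of $\bar\phi$, both of which are immediate, and the rest is a routine Ext chase.
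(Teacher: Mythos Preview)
Your proof is correct. Both your argument and the paper's rest on the same minimal free resolution $0 \to R^{\mu(J)-1} \xrightarrow{\phi} R^{\mu(J)} \to R \to R/J \to 0$ and the rank identity $\rank F_2 = \mu(J)-1$, but you read off the type by a different route. The paper applies the contravariant functor $\Hom_R(-,R)$ to this resolution, identifies $\Ext^2_R(R/J,R)$ with the canonical module of $R/J$, and then invokes the standard fact that the Cohen--Macaulay type equals $\mu$ of the canonical module; minimality of the resolution gives $\mathrm{r}(R/J)=\rank F_2^*=\mu(J)-1$. You instead apply the covariant functor $\Hom_R(k,-)$: first to $0\to J\to R\to R/J\to 0$ to rewrite $\Soc(R/J)$ as $\Ext^1_R(k,J)$, and then to the resolution of $J$ to compute that Ext group directly, using minimality to kill the induced map $\bar\phi$. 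Your approach is a bit more self-contained, since it avoids the canonical-module machinery and the citation to \cite[Proposition~3.3.11]{BH}; the paper's approach is more conceptual in that it explains the equality as an instance of the general duality between type and the last Betti number via the canonical module.
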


\begin{proof}
Let 
\[
\begin{CD}
 \FF \:\; 0  @> \varphi_{3} >> F_{2} @> \varphi_{2} >> F_1@>\varphi_1 >> F_0 @>\varphi_0 >> R/I^n\to  0
\end{CD}
\]
be the minimal $R$-free resolution of $R/I^n$. Note that $\rank F_1 = \mu (I^n)$. It follows that 
\begin{align} \label{rank}
 \rank F_2=\rank F_1-\rank F_0= \mu (I^n) -1.
\end{align}
On the other hand, by applying the $R$-dual $(-)^*=\Hom_R(-, R)$ to $\FF$, we get the complex
\[
\begin{CD}
 \FF^* \:\; 0 \to F_{0}^* @> \varphi_{1}^* >> F_1^* @>\varphi_2^* >> F_2^* @>\varphi_3^*>>    0.
\end{CD}
\]
The second homology of the complex is $\Ext_R^2(R/I^n, R)$, which is the canonical module of $R/I^n$ by \cite[Theorem~3.3.7]{BH}. Hence we obtain that 
\[
\mathrm{r}(R/I^n)=\mu(\Ext_R^2(R/I^n, R))=\mu(F_2^*/\Im \varphi_2^*)=\rank F_2^*
\]
by \cite[Proposition~3.3.11]{BH} and the fact that $\FF$ is minimal. It follows that our assertion holds by (\ref{rank}).
\end{proof}

\begin{Corollary}
Let $S=K[x,y]$ be a polynomial ring over a field $K$. Then we have the following.
\begin{enumerate}[{\rm (a)}] 
\item Let $I$ be a monomial ideal. Then we have $\mathrm{r}(S/I^2)\ge 8$ if $\mathrm{r}(S/I)\ge 5$.
\item For any positive integer $n$, there exists a monomial ideal $I$ such that 
\begin{align*}
 \mathrm{r}(S/I)>\mathrm{r}(S/I^2)>\cdots >\mathrm{r}(S/I^n)=(n+1)^2-1.
\end{align*}
\end{enumerate} 
\end{Corollary}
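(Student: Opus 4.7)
The plan is to derive both parts as immediate translations, via Proposition \ref{type}, of known statements about $\mu(I^k)$. The only preliminary point is that Proposition \ref{type} is stated for a $2$-dimensional regular local ring, while we want to work with $S=K[x,y]$. For an $\fkm$-primary graded ideal $I\subset S$ (with $\fkm=(x,y)$), localization at $\fkm$ preserves both the minimal number of generators and the Cohen--Macaulay type of $S/I^n$, since minimal graded free resolutions localize to minimal free resolutions over the regular local ring $S_\fkm$. Hence Proposition \ref{type} yields
\[
\mu(I^n)=\mathrm{r}(S/I^n)+1
\]
whenever $I$ is an $\fkm$-primary monomial ideal of $S$.

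For part (a), suppose $\mathrm{r}(S/I)\ge 5$. If $I$ is not $\fkm$-primary, then $I$ is a principal ideal (since in $K[x,y]$ a monomial ideal of height $\le 1$ is generated by one monomial up to embedded primes, more precisely one reduces to the principal case by passing to the saturation), and the claim is trivial; so we may assume $I$ is $\fkm$-primary. Then $\mu(I)=\mathrm{r}(S/I)+1\ge 6$, so by \cite[Theorem 1.2]{EliahouHerzogSaem} we have $\mu(I^2)\ge 9$, and applying the identity a second time gives $\mathrm{r}(S/I^2)=\mu(I^2)-1\ge 8$.

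For part (b), take $I$ to be the monomial ideal supplied by Theorem \ref{b1.1}. This $I$ is $\fkm$-primary: the ideal $I_1$ already contains the pure powers $x^{(m+2)p_1}$ and $y^{(m+2)p_1}$, hence so does every $I^k$. Applying the identity $\mathrm{r}(S/I^k)=\mu(I^k)-1$ to each $k\in[1,n]$, the strictly decreasing chain
\[
\mu(I)>\mu(I^2)>\cdots>\mu(I^n)=(n+1)^2
\]
from Theorem \ref{b1.1} transports term by term to
\[
\mathrm{r}(S/I)>\mathrm{r}(S/I^2)>\cdots>\mathrm{r}(S/I^n)=(n+1)^2-1,
\]
as required.

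No step here poses a genuine obstacle: the content is entirely in Proposition \ref{type}, Theorem \ref{b1.1}, and the Eliahou--Herzog--Saem bound. The only mild point to be careful about is the reduction from the graded polynomial setting to the local one needed to invoke Proposition \ref{type}, which is handled by localizing at the irrelevant maximal ideal.
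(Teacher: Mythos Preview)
Your approach is the same as the paper's: both parts are immediate translations via Proposition~\ref{type} of \cite[Theorem 1.2]{EliahouHerzogSaem} and Theorem~\ref{b1.1}, respectively, and you correctly handle the passage from the graded ring $S$ to the local ring $S_\fkm$ (a point the paper leaves implicit).

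One small correction: your parenthetical claim in (a) that a non-$\fkm$-primary monomial ideal in $K[x,y]$ is principal is false --- for instance $I=(x^2,xy)=x(x,y)$ has height~$1$ and is not principal. In general such an ideal factors as $x^ay^bJ$ with $J$ either $S$ or $\fkm$-primary, but $J$ need not be $S$. This digression is in any case unnecessary: the index of irreducibility $\mathrm{r}(S/I)$ is introduced in the paper only for $\fkm$-primary ideals, so the Corollary should be read in that context (as the paper's own one-line proof does). Simply drop the non-$\fkm$-primary case and the argument is clean.
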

\begin{proof}
(a) follows from \cite[Theorem 1.2]{EliahouHerzogSaem} and (b) follows from Theorem ~\ref{b1.1}.
\end{proof}

\begin{Acknowledgment}
Most of this work was done during the visit of the authors to the University of Duisburg-Essen in 2019-2020. The authors are grateful to J\"{u}rgen Herzog for telling them this topic and to the referee for kind advice and comments.
\end{Acknowledgment}



\begin{thebibliography}{BB}

\bibitem{Cocoa}
{\sc J. Abbott, A. M.  Bigatti, L. Robbiano}, {{CoCoA}: a system for doing {C}omputations in {C}ommutative {A}lgebra}, Available at \texttt{http://cocoa.dima.unige.it/cocoalib}.

\bibitem{AHZ}
{\sc R. Abdolmaleki, J. Herzog, Rashid Zaare-Nahandi}, On the initial behavior of the number of generators of powers of monomial ideals, {\em Bull. Math. Soc. Sci. Math. Roumanie,} Tome {\bf 63 (111)},  No. 2. (2020),  119--129.


\bibitem{BH}
{\sc W. Bruns, J. Herzog}, Cohen-Macaulay Rings, Cambridge University Press (1993).

\bibitem{CQT}
{\sc N.  T. Cuong, P. H. Quy, H. L. Truong}, On the index of reducibility in Noetherian modules, {\em J. Pure Appl.  Algebra}, {\bf 219} (2015), 4510--4520.

\bibitem{EliahouHerzogSaem}
{\sc S. Eliahou, J. Herzog, M. M. Saem}, Monomial ideals with tiny squares, {\em J. Algebra}, {\bf 514} (2018), 99--112.

\bibitem{Gasanova}
{\sc O. Gasanova}, Monomial ideals with arbitrarily high tiny powers inany number of variables,  {\em Comm. Algebra}, {\bf 48}(2020), No. 11, 4824--4831.
.
\bibitem{HerzogWaldi}
{\sc J. Herzog, R. Waldi}, A note on the Hilbert function of a one-dimensional Cohen-Macaulay ring, {\em Manuscripta Math.}, {\bf 16} (1975), 251--260.

\bibitem{T}
{\sc H. L. Truong}, Index of reducibility of parameter ideals and Cohen-Macaulay rings, {\em J. Algebra}, {\bf 415} (2014), 35--49.

\end{thebibliography}
\end{document}